\title{The Picard-Vessiot theory, constrained cohomology, and linear differential algebraic groups}
\date{\today}
\author{Anand Pillay\thanks{Partially supported by NSF grant DMS-1360702}\\University of Notre Dame}
\newtheorem{Theorem}{Theorem}[section]
\newtheorem{Definition}[Theorem]{Definition} 
\newtheorem{Remark}[Theorem]{Remark}
\newtheorem{Lemma}[Theorem]{Lemma}
\newtheorem{Corollary}[Theorem]{Corollary}
\newtheorem{Fact}[Theorem]{Fact}
\begin{document}
\maketitle

\begin{abstract} We prove that a differential field $K$ is algebraically closed and Picard-Vessiot closed if and only if the differential Galois cohomology group, $H_{\partial}^{1}(K,G)$, is trivial for any linear differential algebraic group $G$  over $K$.  We give  applications to the parameterized Picard-Vessiot theory and pose several problems. 
\end{abstract}

\section{Introduction and preliminaries}
In this paper we are mainly concerned with differential fields $(K,\partial)$ of characteristic $0$ and we try to relate two ``theories", the  Galois theory of linear differential equations over $K$ (i.e. the Picard-Vessiot theory), and Kolchin's ``constrained cohomology" (which we prefer to  call ``differential Galois cohomology") of  linear differential algebraic groups over $K$.  We work in the category of differential algebraic varieties in the sense of Kolchin \cite{Kolchin-DAG}. Because we are concerned  with differential algebraic groups and their torsors, we may equally well (and in fact do) work in the category of definable sets in the first order theory $DCF_{0}$ of differentially closed fields (with respect to a single derivation $\partial$) of characteristic $0$. 
 The main result is the following, where we explain the notions later. 
\begin{Theorem} Let $(K, \partial)$ be a differential field. Then $K$ is algebraically closed and Picard-Vessiot closed if and only if $H^{1}_{\partial}(K,G) = \{1\}$ for any linear differential algebraic group $G$ over $K$, namely if every differential algebraic principal  homogeneous space $X$ for $G$, defined over $K$, has a $K$-point.
\end{Theorem}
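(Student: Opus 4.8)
The plan is to prove the two implications separately, treating the forward direction (triviality of every $H^{1}_{\partial}(K,G)$ forces algebraic and Picard-Vessiot closedness) as the easy one and the reverse direction as the substantive one. For the easy direction I would recover each closure property by specializing the hypothesis to one family of groups. Taking $G=\Gamma$ a finite (constant) group: for a finite Galois extension $L/K$ with group $\Gamma$, the set of $K$-embeddings of $L$ into the universal domain is a differential algebraic principal homogeneous space for $\Gamma$ over $K$, and it has a $K$-point exactly when $L=K$; since finite $K$-definable sets live in $K^{alg}$ with its induced derivation, triviality of $H^{1}_{\partial}(K,\Gamma)$ for all finite $\Gamma$ coincides with vanishing of ordinary Galois cohomology and forces $K=K^{alg}$ in characteristic $0$.

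Taking $G=GL_{n}(\mathcal{C})$, the group of constant invertible matrices (defined over $\mathbb{Q}$, hence over $K$): for a linear system $\partial Y=AY$ with $A\in\mathfrak{gl}_{n}(K)$ the solution set $X=\{Z\in GL_{n}:\partial Z=AZ\}$ is a principal homogeneous space for $GL_{n}(\mathcal{C})$ under right multiplication, defined over $K$, and a $K$-point of $X$ is precisely a fundamental system of solutions inside $K$. Thus triviality of $H^{1}_{\partial}(K,GL_{n}(\mathcal{C}))$ for all $n$ gives that every linear differential equation over $K$ admits a fundamental system of solutions in $K$, i.e. $K$ is Picard-Vessiot closed.

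For the reverse direction I would argue by d\'evissage on the structure of $G$, using the non-abelian cohomology sequence attached to $1\to N\to G\to Q\to 1$ together with twisting, so that the operative input is vanishing of $H^{1}_{\partial}(K,-)$ on all $K$-forms of the successive subquotients. First, using $1\to G^{0}\to G\to G/G^{0}\to 1$ with $G/G^{0}$ finite and $H^{1}_{\partial}(K,F)=\{1\}$ for finite $F$ (because $K=K^{alg}$), I reduce to connected $G$. Then I invoke the structure theory of linear differential algebraic groups (due to Cassidy and to Cassidy--Singer) to build a normal series for connected $G$ whose successive quotients are of two types: (a) algebraic (Zariski-closed) groups, and (b) groups of constant points $H(\mathcal{C})$ of algebraic groups $H$ over the constants, including the commutative and unipotent pieces arising from linear differential operators. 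For type (a), a differential algebraic torsor under an algebraic group is, via any point over the universal domain, differentially isomorphic to the group and hence is itself an algebraic torsor of order $0$; since $K=K^{alg}$ has trivial absolute Galois group, such a torsor has a $K$-point. For type (b), exactly as in the easy direction an $H(\mathcal{C})$-torsor over $K$ is the object classified by a Picard-Vessiot extension with Galois group a subgroup of $H$, and since $K$ algebraically closed has algebraically closed field of constants, Picard-Vessiot theory applies and Picard-Vessiot closedness supplies a $K$-point.

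I expect the main obstacle to be the structural core of the reverse direction: producing a normal series of $G$ all of whose subquotients, and all of whose $K$-forms (as forced by the twisting in non-abelian $H^{1}$), fall into the two tractable types, and verifying that a differential algebraic torsor under an algebraic group is genuinely algebraic so that Galois-cohomological triviality over $K=K^{alg}$ applies. The bookkeeping of the non-abelian cohomology sequences -- ensuring that vanishing of $H^{1}$ on subquotients and their twists propagates to $H^{1}_{\partial}(K,G)$ -- is the delicate point; by contrast the two endpoint computations (finite and algebraic groups via algebraic closedness, constant groups via Picard-Vessiot closedness) are essentially the content already isolated in the easy direction.
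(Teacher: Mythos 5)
Your right-to-left direction is correct, and your use of the solution variety $\{Z\in GL_{n}: \partial Z = AZ\}$ as a torsor under the constant group $GL_{n}(\mathcal{C})$ is a clean variant of the paper's argument, which instead forms the differential Galois group of a putative proper $PV$ extension and takes the orbit of a fundamental matrix under it. Your left-to-right skeleton is essentially the alternative route the paper itself acknowledges (the Cassidy--Singer Jordan--H\"older decomposition plus a check of the almost simple cases), and your type (a) endpoint is a citable theorem of Kolchin ($H^{1}_{\partial}(K,G)=H^{1}(K,G)$ for $G$ algebraic; Fact 1.6 in the paper), not something you need to reprove. But you have mislocated the delicate point. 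The twisting you worry about is a non-issue: the paper's inductive principle (Lemma 2.1) is proved directly by observing that if $X$ is a $G$-torsor over $K$ and $N$ is normal in $G$ over $K$, then $Y=X/N$ is an honest $G/N$-torsor over $K$, a $K$-point of $Y$ is an $N$-orbit $Z\subseteq X$ defined over $K$, and $Z$ is an honest $N$-torsor because its action is the restriction of the $G$-action. So untwisted vanishing for $N$ and $G/N$ already yields vanishing for $G$; no $K$-forms are ``forced by the cohomology''.

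The genuine gap is in the structural claim itself. The subquotients of the normal series are algebraic groups or constant-point groups only up to isomorphism over $K^{diff}$ (or $\mathcal{U}$); over $K$ they are merely forms of these. The decisive instance: by Cassidy's theorem, the finite-dimensional unipotent subquotients are kernels $\ker(L)\leq \mathbb{G}_{a}$ of linear differential operators over $K$, and $\ker(L)$ is isomorphic over $K$ to $(\mathcal{C}^{d},+)$ exactly when $K$ contains a fundamental system of solutions of $L=0$. Your type (b) endpoint computation, stated for a literal $H(\mathcal{C})$, therefore does not apply to these quotients as they stand; nor can one run the strongly normal argument directly on a torsor under $\ker(L)$, since a translate $g+\alpha$ of a point $\alpha$ of the torsor need not lie in $K\langle\alpha,\mathcal{C}\rangle$ when the basis spanning the possible $g$'s lives only in $K^{diff}$. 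What closes this---and it is the crux of the paper's proof, not bookkeeping---is to apply the hypotheses to the groups themselves before any torsor appears: $PV$-closedness puts a basis of $\ker(L)$ inside $K$, trivializing the form; in the finite-dimensional semisimple case, Pillay's isotriviality theorem gives $G\cong H(\mathcal{C})$ over $K^{diff}$, and a separate descent result (Proposition 4.1 of the algebraic $D$-groups paper), again using the hypotheses, brings the isomorphism down to $K$. Only after these identifications does adjoining a point of a torsor produce a strongly normal extension whose Galois group is a linear algebraic group in the constants, hence a $PV$ extension by a theorem of Kolchin (Chapter VI of his book), which $PV$-closedness then kills. (One also needs the logarithmic derivative to handle finite-dimensional subgroups of $\mathbb{G}_{m}$, and an argument---via $U$-rank and $1$-connectedness, or Cassidy's theorems---that the infinite-dimensional pieces are genuinely algebraic.) So your outline is right, but the step you delegate to ``structure theory'', namely that the subquotients are constant groups \emph{over} $K$, is precisely where $PV$-closedness does its main work, and filling it requires Cassidy's unipotent theorem, Kolchin's strongly-normal-to-$PV$ theorem, and Pillay's descent, not just the Cassidy--Singer decomposition.
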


\begin{Remark}
(i) A Picard-Vessiot (or $PV$) extension $L$ of $K$ is by definition a differential field extension $L$ of $K$ generated over $K$ by a fundamental system of solutions $Y_{1},..,Y_{n}$ of a linear differential system $\partial Y = AY$ over $K$, such that $C_{L} = C_{K}$, namely $L$ has ``no new constants".  So formally, to be $PV$-closed means that $K$ has no proper $PV$ extension $L$. 
\newline
(ii) When $C_{K}$ is algebraically closed (in particular when $K$ is algebraically closed), then there always exists a $PV$ extension of $K$ for any linear differential sytem $\partial Y = AY$, hence to be Picard-Vessiot closed is equivalent to $K$ containing a fundamental system of solutions for any linear system $\partial  Y = AY$ over $K$.
\newline
(iii) When $C_{K}$ is algebraically closed then $K$ being $PV$-closed implies $K$ algebraically closed.
\newline
(iv)  For $G$ a differential algebraic group over $K$, $H^{1}_{\partial}(K,G)$ is defined in \cite{Kolchin-DAG}, Chapter VII,  in terms of certain cocycles from ${\mathcal G} = Aut)K^{diff}/K)$ to $G(K^{diff})$, where $K^{diff}$ is the differential closure of $K$ inside some ``universal" differentially closed field    $\mathcal U$. 
$H^{1}_{\partial}(K, G)$ parametrizes the set of differential algebraic principal homogeneous spaces $X$ for $G$ over $K$ up to isomorphism over $K$ (in the appropriate category). 
So triviality of $H^{1}_{\partial}(K,G)$ means that for every such $PHS$ $X$ for $G$ over $K$,  $X(K)\neq \emptyset$. 
\end{Remark} 

We will also point out how our proof of Theorem 1.1 extends to prove the following:
\begin{Theorem} Let $K$ be a differential field.  Then $K$ is algebraically closed and closed under {\em generalized strongly normal extensions}  if and only if $H^{1}_{\partial}(K,G) = \{1\}$ for any (not necessarily linear) differential algebraic group $G$ over $K$. 
\end{Theorem}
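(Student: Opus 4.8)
The plan is to run the proof of Theorem 1.1 essentially verbatim, isolating the two places where linearity of $G$ and the Picard--Vessiot hypothesis were used, and replacing them with the corresponding facts for arbitrary differential algebraic groups and generalized strongly normal extensions. The common skeleton is a dictionary between torsors and field extensions: a principal homogeneous space $X$ for $G$ over $K$, together with a point $a \in X(K^{diff})$, produces the extension $K\langle a\rangle/K$, and conversely the relevant extensions of $K$ are exactly those so generated. In Theorem 1.1 the field side consisted of PV extensions and $G$ was linear; here I will argue that the field side is exactly the generalized strongly normal extensions, with $G$ now allowed to be an arbitrary differential algebraic group.

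For the implication ($\Leftarrow$), suppose $H^1_\partial(K,G)=\{1\}$ for every differential algebraic group $G$. Algebraic closedness of $K$ is extracted exactly as in Theorem 1.1: a proper finite extension would furnish a nontrivial torsor for a finite group, finite groups being differential algebraic groups, and such a torsor would have no $K$-point. For closure under generalized strongly normal extensions I take such an extension $L/K$ with its binding group $G$ --- a differential algebraic group, by the defining feature of the generalized notion --- and realize $L$ as a $G$-torsor $X$ over $K$ (the space of $K$-isomorphisms into a fixed copy). Triviality of $H^1_\partial(K,G)$ gives $X(K)\neq\emptyset$, and a $K$-point is precisely the assertion that $L=K$. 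This is the step that genuinely requires non-linear $G$, since binding groups of generalized strongly normal extensions need not be linear.

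For the implication ($\Rightarrow$), suppose $K$ is algebraically closed and closed under generalized strongly normal extensions, and let $X$ be a $G$-torsor over $K$. As $X$ is nonempty and defined over $K$ it has a point $a\in X(K^{diff})$, and the crux is to show that $K\langle a\rangle/K$ is generalized strongly normal with binding group a differential algebraic subgroup of $G$; granting this, closure of $K$ forces $K\langle a\rangle=K$, whence $a\in K$ and $X(K)\neq\emptyset$. This claim is the main obstacle, and it is exactly where the linear argument must be upgraded. In the linear case one writes down a concrete linear system satisfied by the coordinates of $a$, reads off the PV property, and invokes algebraic closedness of $C_K$ to exclude new constants. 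In the general case I argue through the Galois theory of $DCF_0$ instead: since $X$ is a torsor, every $K$-conjugate of $a$ has the form $\sigma(a)=g_\sigma\cdot a$ with $g_\sigma\in G(K^{diff})$, so the conjugates of $a$ are controlled by $G$, and this internality of $a$ to $G$ is precisely what makes $K\langle a\rangle/K$ generalized strongly normal, identifying the binding group as a differential algebraic subgroup of $G$. The delicate point remains the absence of new constants, and here, exactly as in Theorem 1.1, algebraic closedness of $K$ (hence of $C_K$) is what rules out the constant obstructions; once this claim and the converse used in ($\Leftarrow$) are established, the two theorems share the same proof.
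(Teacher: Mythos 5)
Your right-to-left direction is essentially the paper's argument and is fine. The gap is in the left-to-right direction, at exactly the step you call ``the crux'': it is \emph{not} true that internality of $a$ to $G$ makes $K\langle a\rangle/K$ a generalized strongly normal extension. Recall the definition: $L=K\langle a\rangle$ is $Y$-strongly normal if (i) every $K$-conjugate of $a$ lies in the field generated by $K$, $a$ and $Y$, \emph{and} (ii) $Y(K)=Y(K^{diff})$. The cocycle relation $\sigma(a)=g_\sigma\cdot a$ gives (i) with $Y=G$, but condition (ii) for $Y=G$ is the statement $G(K)=G(K^{diff})$, which fails for almost every $G$: already for $G=({\cal U},+)$ one has $G(K^{diff})=K^{diff}\neq K=G(K)$. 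Algebraic closedness of $K$ only yields $C_{K^{diff}}=C_K$, i.e.\ condition (ii) for the particular set $Y={\cal C}$; it says nothing about condition (ii) for $G$ itself, so your appeal to ``ruling out the constant obstructions'' addresses the wrong condition. Consequently the closure hypothesis simply does not apply to $K\langle a\rangle$, and the argument collapses. A sanity check that something must be wrong: if your reduction were valid, the identical reasoning would prove Theorem 1.1 with no case analysis at all (every torsor point would generate a PV, or at least strongly normal, extension), whereas the whole content of the paper's proof is that this is false and must be circumvented.

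What the paper actually does is decompose $G$ until each piece is of one of two kinds, and the two kinds are handled by \emph{different} mechanisms. First, Fact 1.8 (Chevalley) gives $1\to N\to G\to H\to 1$ with $N$ linear and $H$ embedded in an abelian variety; the linear part is Theorem 1.1, whose own proof splits (via Lemma 2.1) into pieces that either (a) are, over $K$, groups in the constants --- e.g.\ $(C_K)^d$ or $H({\cal C})$ --- for which condition (ii) \emph{does} hold and the torsor point genuinely generates a strongly normal, in fact PV, extension, so the closure hypothesis applies; or (b) are honest algebraic groups --- $({\cal U},+)$, $({\cal U}^*,\times)$, simple algebraic groups --- for which no field-extension argument is made at all: one quotes Kolchin's theorem (Fact 1.6) that $H^1_\partial(K,G)=H^1(K,G)=\{1\}$ over algebraically closed $K$. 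For the new, abelian part of Theorem 1.3 the paper reduces to a simple abelian variety $A$ and uses the Manin map: the image $\mu(G)$ is linear (back to Theorem 1.1), and the remaining piece is the Manin kernel $A^\sharp$, which satisfies $A^\sharp(K)=A^\sharp(K^{diff})$ --- either because $A^\sharp=A({\cal C})$ when $A$ descends to the constants, or by strong minimality plus torsion (Lemma 2.2 of the paper's reference [DGTIII]) otherwise. Only for such groups, where condition (ii) has been verified, does the paper run your intended argument ``torsor point $\Rightarrow$ generalized strongly normal extension $\Rightarrow$ trivial by closure''. Your proposal is missing precisely this structural input: the Chevalley decomposition, the Manin kernel analysis, and the use of Kolchin's $H^1_\partial=H^1$ comparison for the algebraic-group pieces that the strongly normal theory cannot see.
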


Theorem 1.3 can be seen as evidence for the naturalness of the generalized strongly normal theory. 

\vspace{2mm}
\noindent
Theorem 1.1 has the following consequence for the {\em parameterized Picard-Vessiot theory} \cite{Cassidy-Singer-PPV}, where again  the additional  notions will be explained subsequently. 
\begin{Corollary} Let $K$ be a $\{\partial_{x}, \partial_{t}\}$-differential field where $\partial _{x}$ and $\partial{t}$ are commuting derivations. Let $K^{\partial_{x}}$ denote the field of constants of $K$ with respect to $\partial_{x}$. Let $\partial_{x}Y = AY$ be a parametrized linear differential equation over $K$. Assume that $K^{\partial_{x}}$ is both algebraically closed and $PV$-closed as a $\partial_{t}$-field. Then $K$ has a unique parameterized Picard Vessiot extension for the equation. 
\end{Corollary}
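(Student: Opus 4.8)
The plan is to deduce the corollary from Theorem 1.1 applied to the field of parametric constants. Set $C = K^{\partial_{x}}$ and view it as a differential field for the single derivation $\partial_{t}$. By hypothesis $C$ is algebraically closed and $PV$-closed as a $\partial_{t}$-field, so Theorem 1.1, with $(K,\partial)$ there replaced by $(C,\partial_{t})$, gives that $H^{1}_{\partial_{t}}(C,G)=\{1\}$ for every \emph{linear} differential algebraic group $G$ over $C$ (with respect to $\partial_{t}$). The strategy is then to show that both the existence and the uniqueness of a parameterized Picard--Vessiot ($PPV$) extension for $\partial_{x}Y=AY$ are governed by the splitting of a torsor under such a group over $C$, and hence follow from this single vanishing.

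For the set-up I would fix a large $\{\partial_{x},\partial_{t}\}$-differentially closed field $\mathcal U\supseteq K$ and put $\mathcal C=\mathcal U^{\partial_{x}}$, the field of $\partial_{x}$-constants; then $(\mathcal C,\partial_{t})$ is differentially closed, contains $C$, and contains the $\partial_{t}$-differential closure $C^{diff}$ of $C$. Inside $\mathcal U$ the solution set $X=\{W\in GL_{n}:\partial_{x}W=AW\}$ is nonempty, is defined over $K$, and is a right torsor under $GL_{n}(\mathcal C)$, so any two fundamental solution matrices differ by right multiplication by an element of $GL_{n}(\mathcal C)$. A $PPV$ extension is precisely $K\la W\ra$ for a $W\in X$ with $K\la W\ra^{\partial_{x}}=C$, i.e. one generating no new $\partial_{x}$-constants.

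For existence I would analyse, for a fixed $W_{0}\in X$, how the field of new $\partial_{x}$-constants of $K\la W_{0}B\ra$ varies with $B\in GL_{n}(\mathcal C)$; the obstruction to achieving $K\la W_{0}B\ra^{\partial_{x}}=C$ is to be encoded by a Kolchin cocycle on $\mathrm{Aut}(C^{diff}/C)$ valued in the $\mathcal C$-points of a linear $\partial_{t}$-differential algebraic group over $C$, and its class in $H^{1}_{\partial_{t}}(C,\,\cdot\,)$ is the sole obstruction; triviality of that class supplies the required $W=W_{0}B$ and hence a $PPV$ extension. For uniqueness, given two $PPV$ extensions $L_{1}=K\la Z_{1}\ra$ and $L_{2}=K\la Z_{2}\ra$ inside $\mathcal U$, both with constants $C$, I would write $Z_{2}=Z_{1}B$ with $B\in GL_{n}(\mathcal C)$ and observe that the set of $K$-$\{\partial_{x},\partial_{t}\}$-isomorphisms $L_{1}\to L_{2}$ is a torsor under the parameterized Galois group $G=\mathrm{Gal}_{\partial_{t}}(L_{1}/K)$, which by the Cassidy--Singer theory is a linear differential algebraic group over $C$; its class in $H^{1}_{\partial_{t}}(C,G)$ vanishes, yielding an isomorphism over $K$ and with it uniqueness up to $K$-isomorphism.

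The hard part will be the precise cohomological bookkeeping in the existence step: identifying the ``defect of new constants'' attached to a choice of fundamental matrix with an honest constrained $1$-cocycle on $\mathrm{Aut}(C^{diff}/C)$, checking that it is $\partial_{t}$-differential-algebraic and $C$-rational, and verifying that the group in which it is valued is a \emph{linear} $\partial_{t}$-differential algebraic group over $C$ so that Theorem 1.1 genuinely applies. Once this translation between ``no new $\partial_{x}$-constants'' and the triviality of a class in $H^{1}_{\partial_{t}}(C,G)$ is in place, existence and uniqueness both drop out; I expect the algebraic closedness of $C$ to be what powers existence and the full $PV$-closedness (equivalently the vanishing of $H^{1}_{\partial_{t}}(C,G)$ for all linear $G$) to be what forces uniqueness.
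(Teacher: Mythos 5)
Your uniqueness argument has the right skeleton and matches the paper's strategy in outline: identify the set of isomorphisms between two $PPV$ extensions with (the points of) a differential algebraic $PHS$ over $C=K^{\partial_x}$ for a linear $\partial_t$-differential algebraic group $G$ over $C$, and kill it with Theorem 1.1. But there are two genuine gaps. The first and largest is the existence step. The paper does \emph{not} derive existence from cohomology: it imports it wholesale from Wibmer's theorem, which guarantees a $PPV$ extension exists whenever $C$ is algebraically closed as a $\partial_t$-field. Your proposed alternative --- encoding the ``defect of new $\partial_x$-constants'' of $K\la W_0 B\ra$, as $B$ ranges over $GL_n(\mathcal U^{\partial_x})$, by a constrained $1$-cocycle on $\mathrm{Aut}(C^{diff}/C)$ --- is not a routine verification but is essentially a proposal to reprove Wibmer's theorem, and it faces a circularity: the linear differential algebraic group over $C$ in which your cocycle is supposed to take values is the parameterized Galois group, and defining that group \emph{over $C$} (rather than over the much larger field $(K^{diff})^{\partial_x}$, which is the $\partial_t$-differential closure of $C$, not its algebraic closure) is only available once one has a $PPV$ extension in hand or invokes the differential Tannakian machinery. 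Nothing in your sketch explains how to produce this group, or the $C$-rational torsor structure, before existence is known.

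The second gap infects the uniqueness step as well. You attribute to ``the Cassidy--Singer theory'' the fact that $\mathrm{Gal}_{\partial_t}(L_1/K)$ is a linear differential algebraic group over $C$; but Cassidy and Singer develop parameterized Picard--Vessiot theory under the hypothesis that the parametric constants are \emph{differentially} closed, which is exactly what is not assumed here ($C$ is only algebraically closed and $PV$-closed). Getting the Galois group, and the torsor of isomorphisms (equivalently, of connecting matrices $B$ with $Z_2=Z_1B$), defined over $C$ in the differential algebraic category is the nontrivial descent content that the paper outsources to Gillet--Gorchinskiy--Ovchinnikov (their Propositions 4.25, 4.28 and Theorem 5.5, proved via a differential Tannakian formalism), applied after base change to $K_1 = K\cdot(K^{diff})^{\partial_x}$. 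What you call ``precise cohomological bookkeeping'' is precisely this Tannakian/descent input; once it is granted, the application of Theorem 1.1 to conclude uniqueness is the same one-line step as in the paper. So your write-up would become a correct proof only after substituting citations of Wibmer (for existence) and GGO (for the $H^1$ classification) for the two steps you propose to carry out by hand.
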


\vspace{5mm}
\noindent
Although our results are stated in purely differential algebraic terms, we will freely use a mixture of differential algebraic and model-theoretic notations and methods  in the proofs.  In fact, differential algebraic geometry in the sense of Kolchin is synonymous with the model theory of differentially closed fields, in terms of content and subject matter, although model theory brings with it a certain generality in point of view. 

This paper is concerned mainly with ordinary differential fields of characteristic $0$, namely fields $K$ of characteristic $0$ equipped with a single derivation, although in Corollary 1.4 and section 3 we touch on the case of a set $\Delta$ of commuting derivations.  For the majority of the introduction we will focus on $\Delta = \{\partial\}$.  For differential algebraic geometry, namely the theory (and category) of differential algebraic varieties defined  over a differential field $K$, the reader could consult \cite{Kolchin-DAG}, although some of the axiomatic presentation is a bit obscure.  But Section 1.1 of \cite{Cassidy-Singer-Jordan-Holder}  can also serve as an introduction to this material.  The reader is also referred to Buium's book \cite{Buium} for an approach somewhat more informed by modern algebraic geometry. Key objects of this paper are differential algebraic groups $G$ and their principal homogeneous spaces (or torsors) $X$. Namely $G$ is a differential algebraic group defined over $K$, $X$ is a differential algebraic variety defined over $K$, asnd we have a regular (strictly transitive) group  action of $G$ on $X$ (on the left) which is a morphism $G\times X \to X$ over $K$ defined over $K$ in the sense of differential algebraic varieties. 

$DCF_{0}$ is the first order theory of differentially closed fields of characteristic $0$ in the language$L = \{ +,-,0,1, \times, \partial\}$. It is complete, $\omega$-stable,  with quantifier elimination, and elimination of imaginaries.  We take ${\cal U}$ to be a ``saturated" model of $DCF_{0}$ in which all differential fields we are concerned with are assumed to embed. ${\cal U}$ is precisely a ``universal" differential field in the sense of Kolchin.  $K$ will usually denote a ``small" differential subfield of $\cal U$.   ${\cal C}$ denotes the field of constants of ${\cal U}$, and $C_{K}$ the field of constants of $K$. But when we work with partial fields (several commuting derivations) notation will be a bit different. The reader is referred to \cite{Marker} for the basic model theory of $DCF_{0}$.   By a $K$-definable set we mean a subset of ${\cal U}^{n}$ which is definable with parameters from $K$  in the $L$-structure $\cal U$. Our paper \cite{Pillay-Foundations} gives an exhaustive account of Kolchin's theory of differential algebraic varieties and groups, and its relation to the category of definable sets and definable groups, and the interested reader is referred there. 
We summarize: 
\begin{Fact}
(i)  any group $G$ definable over $K$ has a unique structure of a differential algebraic group over $K$.
\newline
 (ii) If $G$ is a definable group over $K$ and $X$ is a principal homogeneous space for $G$ over $K$, in the category of definable sets over $K$, then $X$ can be given (uniquely) the structure of a differential algebraic variety over $K$ such that $X$ is a $PHS$ for $G$ over $K$ in the category of differentiak algebraic varieties over $K$,
\newline
(iii)  If $(G,X)$ is a definable (differential algebraic) $PHS$ over $K$ then there is an embedding (over $K$) in an algebraic $PHS$ $(G_{1},X_{1})$ over $K$. 
\end{Fact}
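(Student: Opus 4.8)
The unifying theme of all three clauses is the dictionary, furnished by quantifier elimination and elimination of imaginaries in $DCF_{0}$, between the model-theoretic category of $K$-definable sets and maps and Kolchin's category of differential algebraic varieties and morphisms over $K$. Recall that by quantifier elimination every $K$-definable subset of ${\cal U}^{n}$ is a finite Boolean combination of Kolchin-closed sets (zero sets of differential polynomials over $K$), so it carries a canonical structure of differential algebraic variety, namely its Kolchin topology together with its sheaf of differential-regular functions; moreover every $K$-definable function between such sets is, on a Kolchin-dense open, a differential-rational map over $K$. My plan is to derive each clause from this dictionary, reserving the geometric work for (iii).

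For (i), I would equip the $K$-definable group $G$ with the differential algebraic variety structure just described. The multiplication $m\colon G\times G\to G$ and the inversion are $K$-definable, hence differential-rational; the only issue is to upgrade ``differential-rational on a dense open'' to a genuine differential algebraic group law, regular on affine charts covering $G$ in the sense of Kolchin's definition. This is precisely the setting of the Weil--Hrushovski group chunk theorem in the $\omega$-stable theory $DCF_{0}$: from the generic group data one recovers a differential algebraic group $G'$ together with a $K$-definable isomorphism $G\to G'$, along which one transports the structure. Uniqueness is immediate from the dictionary, since if $G$ carried two such structures the identity would be a $K$-definable bijection, hence together with its inverse a differential-regular morphism, that is, an isomorphism of differential algebraic groups.

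For (ii), I would apply the same translation to the underlying $K$-definable set $X$, endowing it with its canonical differential algebraic variety structure over $K$, and note that the action $G\times X\to X$, being $K$-definable, is automatically a morphism of differential algebraic varieties. That the action is regular (simply transitive) is a first-order property preserved under the dictionary, so $(G,X)$ is a principal homogeneous space in Kolchin's category; uniqueness follows exactly as in (i).

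The substance lies in (iii), which is genuinely geometric rather than formal, and which I expect to be the main obstacle. The key input is the embedding theorem: every differential algebraic group $G$ over $K$ is $K$-definably isomorphic to a differential algebraic subgroup of (the group of ${\cal U}$-points of) an algebraic group $G_{1}$ defined over $K$. Granting this, I would form the induced torsor $X_{1}=G_{1}\times^{G}X$, the contracted product obtained as the quotient of $G_{1}\times X$ by the action $g\cdot(h,x)=(hg^{-1},gx)$ of $G$; this is a differential algebraic principal homogeneous space for the algebraic group $G_{1}$ over $K$, into which $X$ embeds $G$-equivariantly via $x\mapsto[e,x]$ (injectivity and equivariance being routine checks on the defining relation). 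The final point is that a differential algebraic torsor for an \emph{algebraic} group acquires the structure of an algebraic principal homogeneous space over $K$: over the differential closure $K^{diff}$ the torsor has a point and hence becomes isomorphic to the algebraic variety $G_{1}$, and this algebraic structure descends to $K$ because the attached cocycle takes values in the algebraic group $G_{1}(K^{diff})$. I therefore expect the two constituents of this clause, the embedding theorem for differential algebraic groups and the algebraicity-by-descent of the induced torsor, to carry essentially all the difficulty, the first two parts being formal consequences of quantifier elimination, elimination of imaginaries, and the group chunk machinery.
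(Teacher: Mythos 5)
The paper itself offers no proof of this statement: it is recorded as a Fact and attributed wholesale to \cite{Pillay-Foundations}, so your attempt can only be measured against the standard arguments of that reference, and in outline it does match them --- the definable/differential-algebraic dictionary plus Hrushovski--Weil group chunks for (i) and (ii), and, for (iii), embedding $G$ into an algebraic group $G_{1}$ and inducing a $G_{1}$-torsor. One minor imprecision first: a $K$-definable map between differential algebraic varieties is only piecewise (generically) differential-rational, so neither the action in (ii) nor the identity map in your uniqueness argument for (i) is ``automatically'' a morphism; in both cases one must smooth the generic data by translation, using homogeneity of the group and of the torsor --- the same device as in the group-chunk step you invoke for existence in (i). As stated, ``a $K$-definable bijection ... hence a differential-regular morphism'' is false for general definable bijections and true for group (or torsor) isomorphisms only after this argument.

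The genuine gap is in your final step of (iii). The ``algebraicity-by-descent of the induced torsor'' is not a proof but a restatement of the theorem it is supposed to establish: $K^{diff}/K$ is not an algebraic extension and $Aut(K^{diff}/K)$ is not profinite, so there is no general descent mechanism along it. A cocycle of $Aut(K^{diff}/K)$ with values in $G_{1}(K^{diff})$ a priori yields only a \emph{differential algebraic} torsor, and the assertion that such a torsor for an \emph{algebraic} group is isomorphic over $K$ to an algebraic one is precisely Kolchin's theorem that $H^{1}_{\partial}(K,G_{1}) = H^{1}(K,G_{1})$, which this paper records separately as Fact 1.6 (Theorem 4, Chapter VII of \cite{Kolchin-DAG}). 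Your argument is circular at exactly this point, but it can be closed by citation: your contracted product $X_{1} = G_{1}\times^{G}X$ is a differential algebraic PHS for the algebraic group $G_{1}$ over $K$ (your verification of transitivity, freeness, and equivariance of $x\mapsto[e,x]$ is fine, with the quotient existing as a definable set by elimination of imaginaries), Fact 1.6 then gives a $K$-isomorphism, respecting the $G_{1}$-action, with the ${\cal U}$-points of an algebraic PHS, and composing with your equivariant embedding finishes. Note finally that your other black box, the embedding theorem for differential algebraic groups into algebraic groups, is essentially the $X=G$ instance of (iii); so what you have is a correct reduction of the torsor statement to the group statement plus Fact 1.6, while the actual substance of the Fact --- proved in \cite{Pillay-Foundations} via generic types and a Weil-style construction of the ambient algebraic group --- remains untouched.
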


Also connectedness of groups has the same meaning in the definable and differential algebraic categories.

\vspace{5mm}
\noindent
An important notion in this paper is the {\em differential closure} $K^{diff}$ of a differential field $K$.  It is the analogue in differential algebra of the algebraic closure $K^{alg}$ of a field $K$, and from the point of view of model theory is the ``prime model over $K$".   Now $Aut(K^{alg}/K)$ is a profinite group, but $Aut(K^{diff}/K)$ does not have such a nice ``geometric"structure. 
As in the case of usual Galois cohomology, the set of $PHS$'s over $K$ for a given differential algebraic group $G$ over $K$ can be described in terms of suitable cocycles from $Aut(K^{diff}/K)$ to $G(K^{diff})$, modulo the equivalence relation of being cohologous.  This is put into a somewhat more general model-theoretic environment in \cite{Pillay-Galoiscohomology}, where it is pointed out that {\em definable} cocycles  suffice. For the purposes of the current paper, these descriptions are not needed.  In any case $H^{1}_{\partial}(K, G)$ denotes the set of (differential algebraic) $PHS$'s for $G$, over$K$ up to isomorphism over $K$, where the isomorphism should respect the $G$-action.  We will make use of Kolchin's theorem (Theorem 4, Chapter VII, \cite{Kolchin-DAG})  which says:
\begin{Fact} Suppose $G$ is an algebraic group over $K$. Then $H^{1}_{\partial}(K,G) = H^{1}(K, G)$. In particular if $K$ is algebraically closed then $H^{1}(K,G) = \{1\}$. 
\end{Fact}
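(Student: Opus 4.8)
The plan is to show that the natural ``inflation'' map is a bijection, reducing everything to the algebraically closed case. Since any algebraic variety and any algebraic morphism over $K$ is in particular differential algebraic over $K$, an algebraic $PHS$ for $G$ over $K$ is in particular a differential algebraic one, and algebraically isomorphic torsors are differentially isomorphic; this gives a well-defined map $\iota\colon H^{1}(K,G)\to H^{1}_{\partial}(K,G)$. The theorem asserts that $\iota$ is a bijection (the final ``in particular'' is then immediate: if $K=K^{alg}$ then $\mathrm{Gal}(K^{alg}/K)=1$, so $H^{1}(K,G)=\{1\}$ and hence also $H^{1}_{\partial}(K,G)=\{1\}$). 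I would isolate two claims: that $\iota$ is injective, and that $\iota$ is surjective, showing the latter follows once $H^{1}_{\partial}(K^{alg},G)=\{1\}$.

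For injectivity, suppose $X_{1},X_{2}$ are algebraic torsors over $K$ and $f\colon X_{1}\to X_{2}$ is a differential-algebraic isomorphism of $G$-torsors over $K$. Since $K^{alg}$ is algebraically closed, ordinary Galois cohomology gives $X_{i}(K^{alg})\neq\emptyset$; fixing $a_{i}\in X_{i}(K^{alg})$ we get $f(a_{1})=g\cdot a_{2}$ for a unique $g\in G(K^{alg})$, and under the algebraic trivializations $h\mapsto h\cdot a_{i}$ the map $f$ becomes right translation by $g$ on $G$, hence is an algebraic morphism over $K^{alg}$. As $f$ is also defined over $K$, its graph is a differential algebraic subvariety of $X_{1}\times X_{2}$ over $K$ that is Zariski closed after base change to $K^{alg}$, hence already Zariski closed over $K$; thus $f$ is an isomorphism of algebraic torsors over $K$ and $[X_{1}]=[X_{2}]$ in $H^{1}(K,G)$. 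For the reduction of surjectivity: given any differential torsor $X$ over $K$, if $X(K^{alg})\neq\emptyset$, fix $a\in X(K^{alg})$; then $\sigma\mapsto\sigma(a)\cdot a^{-1}$ is an algebraic $\mathrm{Gal}(K^{alg}/K)$-cocycle valued in $G(K^{alg})$, and the associated class in $H^{1}(K,G)$ yields an algebraic torsor $X'$ with $\iota([X'])=[X]$ by Galois descent. Hence $\iota$ is onto as soon as every differential $G$-torsor acquires a $K^{alg}$-point, i.e.\ as soon as $H^{1}_{\partial}(K^{alg},G)=\{1\}$.

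It remains to prove the heart of the matter: $H^{1}_{\partial}(F,G)=\{1\}$ for $F$ algebraically closed and $G$ algebraic, equivalently that every differential $G$-torsor over $F$ is isomorphic to an algebraic one. Here I would invoke Fact 1.5(iii) to embed $(G,X)$ in an algebraic $PHS$ $(G_{1},X_{1})$ over $F$; since $H^{1}(F,G_{1})=\{1\}$, $X_{1}$ is trivial and $X_{1}\cong G_{1}$ over $F$. Writing $H$ for the Zariski closure of $G$ in $G_{1}$, the Zariski closure $Y=\overline{X}$ is a closed coset, hence an algebraic $H$-torsor over $F$, and therefore has an $F$-point. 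The \emph{main obstacle} is exactly that the given embedding need not be a closed immersion: $G$ is in general only Zariski dense in $H$ (e.g.\ $\mathbb{G}_{a}\hookrightarrow\mathbb{G}_{a}^{2}$ via $t\mapsto(t,\partial t)$), so $X=G\cdot x$ is a proper dense differential suborbit of the algebraic coset $Y=H\cdot x$, and an $F$-point of $Y$ need not lie on $X$. To push a rational point into $X$ I would proceed by d\'evissage on the structure of $G$: reduce via the long exact cohomology sequences to the building blocks --- finite groups (where the torsor is a finite $F$-definable set, so its points lie in $\mathrm{acl}(F)=F$), $\mathbb{G}_{a}$ and tori (Hilbert $90$-type vanishing), and abelian varieties --- and establish $H^{1}_{\partial}(F,-)=\{1\}$ for each. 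I expect this d\'evissage, and in particular the unipotent and toric vanishing statements that control the discrepancy between $X$ and its Zariski closure, to be the technical core of the argument.
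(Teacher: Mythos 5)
There is a genuine gap, and it sits exactly where you placed your ``I expect this to be the technical core'' disclaimer. First, a point of comparison: the paper does not prove this Fact at all --- it is quoted as Kolchin's theorem (Theorem 4, Chapter VII of \cite{Kolchin-DAG}), and it is then \emph{used} as a black box in the proof of Theorem 1.1 (Cases (2)(a) and (2)(b)). Your preparatory reductions are essentially correct: the inflation map $\iota$ is well defined; your injectivity argument (trivialize both algebraic torsors over $K^{alg}$, observe that a differential equivariant map becomes right translation by an element of $G(K^{alg})$, hence is a regular morphism, and then descend the Zariski-closed graph to $K$ by Galois invariance) works; and your reduction of surjectivity to the vanishing of $H^{1}_{\partial}(K^{alg},G)$ via the continuous cocycle $\sigma\mapsto\sigma(a)\cdot a^{-1}$ and classical twisting is sound. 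But all of this only proves: ``the Fact holds over $K$ provided it holds over $K^{alg}$.'' The statement over an algebraically closed differential field $F$ --- that every \emph{differential} algebraic $G$-torsor over $F$ has an $F$-point when $G$ is algebraic --- is the entire content of Kolchin's theorem, and your proposal does not prove it. Note that it cannot follow from any soft argument: the exactly analogous statement for the differential algebraic subgroup ${\mathcal C}=\ker\partial$ of ${\mathbb G}_{a}$ is \emph{false} (over $F=\overline{\mathbb{Q}(t)}$ the torsor $\{y:\partial y=1/t\}$ for $\mathcal C$ has no point in $F$, since $\log t$ is transcendental), so any correct proof must use in an essential way that $G$ is the \emph{full} algebraic group; your sketch never isolates where this hypothesis enters.

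Moreover, the d\'evissage you propose for the algebraically closed case is flawed as a plan, not just unexecuted. The inductive principle (the paper's Lemma 2.1) only lets you climb normal series, and the composition factors of a general algebraic group are not exhausted by finite groups, ${\mathbb G}_{a}$, tori, and abelian varieties: noncommutative simple linear algebraic groups such as $PGL_{2}$ occur as factors and admit no further normal-subgroup reduction. In the classical setting this case is invisible because any torsor is a variety and so has points over an algebraically closed field; in the differential setting the simple case needs its own argument (compare the paper's Cases (1)(b) and (2)(b), which for the \emph{main} theorem invoke Cassidy's classification of semisimple differential algebraic groups and Pillay's results precisely to handle it). So even granting your unproven building-block vanishing statements for ${\mathbb G}_{a}$ and ${\mathbb G}_{m}$, the proposed induction would not reach, e.g., $G=PGL_{2}$. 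In short: the framing and the reduction to $K^{alg}$ are fine, but the mathematical heart of the statement is missing, and the route sketched for it would not succeed as stated; the honest course here is what the paper itself does, namely cite Kolchin's Theorem 4, Chapter VII.
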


\begin{Definition}(i) A linear differential algebraic group ($LDAG$)over $K$ is a subgroup of $GL_{n}({\cal U})$ which is differential algebraic over $K$, equivalently definable over $K$ in ${\cal U}$. 
\newline
(ii) An Abelian differential algebraic group over $K$ is a differential algebraic (equivalently definable) over $K$ subgroup of $A({\cal U})$ for some Abelian variety $A$ over $K$. 
\end{Definition} 

Due to Fact 1.5 (iii)  and the Chevalley decomposition for algebraic groups we have:
\begin{Fact} Let $G$ be a connected differential algebraic group over $K$. Then we have an exact sequence $1\to C \to G \to B \to 1$  of connected differential algebraic groups over $K$ where $C$ is linear and $B$ is Abelian. 
\end{Fact}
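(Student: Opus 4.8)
The plan is to reduce everything to the classical Chevalley decomposition by first leaving the differential algebraic world for the algebraic one. I would begin by applying Fact 1.5(iii) to the principal homogeneous space $(G,G)$ arising from the left-translation action of $G$ on itself; this embeds $G$, over $K$, as a differential algebraic subgroup of an algebraic group $G_{1}$ over $K$. Since $G$ is connected it lands inside the connected component $G_{1}^{0}$, so I may assume $G_{1}$ is a connected algebraic group over $K$.

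Next I would invoke the Chevalley decomposition of $G_{1}$: as $K$ has characteristic $0$ (hence is perfect), there is an exact sequence $1\to L\to G_{1}\xrightarrow{\pi} A\to 1$ of connected algebraic groups over $K$, with $L$ a connected linear algebraic group and $A$ an abelian variety. Setting $B=\pi(G)$ and $C=G\cap L=\ker(\pi|_{G})$ immediately yields an exact sequence $1\to C\to G\to B\to 1$. Here $B$ is the image of the connected group $G$ under a morphism, hence a connected differential algebraic subgroup of the abelian variety $A$, so $B$ is Abelian in the sense of Definition 1.7(ii); and $C$ is a differential algebraic subgroup of the linear algebraic group $L\subseteq GL_{n}$, so $C$ is a linear differential algebraic group.

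The delicate point, and the one I expect to be the main obstacle, is connectedness of $C$: the intersection $G\cap L$ of two connected groups can perfectly well be disconnected. I would deal with this by replacing $C$ by its identity component $C^{0}$, which is characteristic in $C$ and hence still normal in $G$, and then verifying that the quotient $G/C^{0}$ remains Abelian. Indeed $G/C^{0}$ is connected, the finite normal subgroup $C/C^{0}$ is central by rigidity (a connected group admits no nontrivial homomorphism to the finite automorphism group of $C/C^{0}$), and since $B=G/C$ is commutative the derived subgroup of $G/C^{0}$ lands in the finite group $C/C^{0}$; as the derived subgroup of a connected group is connected, it is trivial, so $G/C^{0}$ is commutative. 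It then remains to check that this connected commutative group, which covers the subgroup $B$ of an abelian variety with finite kernel, is itself a subgroup of an abelian variety — which I would obtain by embedding $G/C^{0}$ in its (connected, commutative) Zariski closure, applying Chevalley once more, and using the orthogonality of the linear and abelian parts (a connected differential algebraic group that is simultaneously linear and a subgroup of an abelian variety is trivial) to see that its linear part is finite. Replacing $(C,B)$ by $(C^{0},G/C^{0})$ then produces the required exact sequence of \emph{connected} differential algebraic groups, and closing this last connectedness bookkeeping is where the real work lies.
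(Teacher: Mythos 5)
Your first two paragraphs are exactly the paper's own proof: Fact 1.8 is stated there precisely as a consequence of Fact 1.5(iii) together with the Chevalley decomposition, with no further argument supplied. Moreover your worry about connectedness of $C=G\cap L$ is legitimate and not hypothetical, already for algebraic subgroups: take an elliptic curve $E$ with a point $p$ of order $2$, let $G_{1}=(\mathbb{G}_{m}\times E)/\langle(-1,p)\rangle$, whose Chevalley sequence is $1\to\mathbb{G}_{m}\to G_{1}\to E/\langle p\rangle\to 1$, and let $G$ be the (connected) image of $\{1\}\times E$; then $G\cap\mathbb{G}_{m}$ has exactly two elements. Your repairs of normality and of commutativity of $G/C^{0}$ are correct, although I would replace the appeal to ``derived subgroups of connected groups are connected'' (delicate for definable groups of infinite rank, where the derived subgroup need not even be definable) by the direct remark that, the commutators being central, for each fixed $x$ the map $y\mapsto[x,y]$ is a definable homomorphism from the connected group $G/C^{0}$ to the finite group $C/C^{0}$, hence trivial.

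The genuine gap is your last step. Showing that the linear part of $G/C^{0}$ in a fresh embedding is finite does not make $G/C^{0}$ Abelian in the sense of Definition 1.7(ii): it only re-establishes what you already knew, namely that $G/C^{0}$ admits a definable surjection with finite kernel onto a definable subgroup of an abelian variety (the map $\pi: G/C^{0}\to B$ with kernel $C/C^{0}$ is already such). So even granting your orthogonality principle, the argument returns to its starting point; the entire remaining content is to upgrade ``finite-kernel cover of a subgroup of an abelian variety'' to ``subgroup of an abelian variety'', and nothing in the proposal does that. Here is one way to close it. Write $H=G/C^{0}$, $F=C/C^{0}$, let $n$ be the exponent of $F$, and let $A$ be the abelian variety containing $B$. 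Since $H$ is commutative and $F\subseteq H[n]$, the map $x\mapsto x^{n}$ factors as $\psi\circ\pi$ for a $K$-definable homomorphism $\psi: B\to H$. Now $H[n]$ is finite (it maps to $B[n]\subseteq A[n]$ with kernel $F$), so by the $U$-rank inequalities for groups (Corollary 6.3 of \cite{Poizat-stablegroups}, quoted in the paper) $U(H^{n})=U(H)$ and then $U(H/H^{n})=0$, i.e. $H/H^{n}$ is finite; connectedness of $H$ gives $H^{n}=H$, so $\psi$ is surjective, with finite kernel $F'=\pi(H[n])\subseteq A[n]$. Hence $H\cong B/F'$, which embeds over $K$ in the abelian variety $A/F'$. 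With this (or an equivalent isogeny/push-out argument) added, your construction does prove the Fact; without it, the connectedness repair does not terminate.
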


We will make use of the following result from \cite{Cassidy-LDAG}:
\begin{Fact}
 If $G$ is a $LDAG$ over $K$ and $N$ is a normal definable (over $K$) subgroup, then $G/N$ is an $LDAG$ over $K$. 
\end{Fact}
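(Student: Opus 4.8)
The plan is to reduce the statement to the construction of a \emph{faithful} representation of the quotient. Concretely, it suffices to produce a differential algebraic group homomorphism $\rho \colon G \to GL_m(\mathcal{U})$, defined over $K$, with $\ker(\rho) = N$. Given such a $\rho$, its image $\rho(G)$ is a differential algebraic (definable) subgroup of $GL_m$ over $K$ --- in the category of definable groups in $DCF_0$ the image of a definable homomorphism is again definable --- and the induced map $G/N \to \rho(G)$ is an isomorphism of differential algebraic groups over $K$. Since $\rho(G) \le GL_m(\mathcal{U})$ is by definition an $LDAG$ over $K$, this identifies $G/N$ with an $LDAG$ and we are done. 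Thus the whole problem is to manufacture $\rho$, i.e.\ to realise the normal subgroup $N$ as the kernel of a differential-polynomial representation of $G$.

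To build $\rho$ I would carry out the differential algebraic analogue of Chevalley's theorem inside the differential coordinate Hopf algebra $R = K\{G\}$ of differential polynomial functions on $G$, with $G$ acting by right translation $(g\cdot f)(x) = f(xg)$. The essential input is \emph{local finiteness}: every $f \in R$ lies in a finite-dimensional $G$-invariant $\mathcal{C}$-subspace of $R$ (the coaction keeps each $f$ and its derivatives inside a bounded-dimensional piece, just as $\Delta(t) = t \otimes 1 + 1 \otimes t$ does for the additive group). The subgroup $N$ is the zero set of a radical differential ideal $\mathfrak{a} = I(N) \subseteq R$, which is finitely generated by the Ritt--Raudenbush basis theorem; using local finiteness I enlarge a finite generating set of $\mathfrak{a}$ to a finite-dimensional $G$-invariant subspace $W \subseteq R$ so that, with $W_0 = W \cap \mathfrak{a}$, one has $N = \{g \in G : g\cdot W_0 = W_0\} = \mathrm{Stab}_G(W_0)$. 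Passing to the top exterior power $L = \bigwedge^{\dim W_0} W_0 \subseteq \bigwedge^{\dim W_0} W$ converts a subspace stabiliser into a line stabiliser: $N = \{g \in G : g\cdot L = L\}$, with $N$ acting on the line $L$ through a character $\chi \colon N \to \mathbb{G}_m$.

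It remains to upgrade ``stabiliser of a line'' to ``kernel of a representation'', and this is exactly where the normality of $N$ enters. Because $N \trianglelefteq G$, the group $G$ permutes the $N$-weight spaces of the ambient representation and permutes the associated characters $\chi^g(n) = \chi(gng^{-1})$ among the finitely many weights occurring; assembling the representation on $W$ together with the one-dimensional action on $L$ (and, when $\chi \neq 1$, twisting by a suitable dual power so that $N$ acts trivially on the resulting line while every $g \notin N$ still moves it) produces a representation $\rho$ with $\ker(\rho) = N$. This is the standard normal-subgroup refinement of the Chevalley construction and transfers verbatim once the differential algebraic representation category is set up.

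The main obstacle is therefore not the formal group theory but the first step: local finiteness of $R = K\{G\}$ under translation, equivalently the statement that $K\{G\}$ is a locally finite comodule over itself, and its consequence that $N$ is the stabiliser of a finite-dimensional subspace. In the plain algebraic-group setting these are classical, but in the differential category they are precisely the content of Cassidy's representation theory of linear differential algebraic groups; this is the substantive input furnished by \cite{Cassidy-LDAG}, and it is what makes the entire Chevalley argument --- and hence the linearity of $G/N$ --- go through.
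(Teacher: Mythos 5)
The paper does not actually prove this statement: it appears as Fact 1.9, quoted directly from Cassidy's paper \cite{Cassidy-LDAG}, and your Chevalley-style plan --- realise $N$ as the kernel of a differential polynomial representation of $G$, built from a finite-dimensional translation-invariant subspace of the differential coordinate ring containing a Ritt--Raudenbush basis of $I(N)$, then pass from a subspace stabiliser to a line stabiliser and use normality to kill the character --- is in substance exactly the argument that lives behind that citation. So your route is the intended one, and your identification of local finiteness of the translation action on $K\{G\}$ as the substantive input is also correct.

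Two details need repair before the sketch is actually right as written. First, the finite-dimensional invariant subspaces furnished by local finiteness are subspaces over the \emph{field} ($K$, or $\mathcal{U}$), not over the constants $\mathcal{C}$ as you wrote: the translation orbit of $f \in K\{G\}$ spans a finite-dimensional $\mathcal{U}$-space because comultiplication lands in the ordinary tensor product $K\{G\} \otimes_K K\{G\}$ (using $K\{G \times G\} = K\{G\} \otimes_K K\{G\}$), but it is typically infinite-dimensional over $\mathcal{C}$ --- already for $\mathbb{G}_a$, the $\mathcal{C}$-span of $\{t + c : c \in \mathcal{U}\}$ is infinite-dimensional. This matters, since the representation you build must land in $GL_m(\mathcal{U})$; a $\mathcal{C}$-structure is neither available nor wanted, and in the differential setting confusing the two fields is not harmless. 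Relatedly, to conclude $N = \mathrm{Stab}_G(W_0)$ you need that right translation by $g \in G(\mathcal{U})$ commutes with the derivation on the coordinate ring, so that $g \cdot W_0 = W_0$ forces $g$ to fix the radical differential ideal $\{W_0\} = \mathfrak{a}$; this is true but is precisely the kind of point that must be checked rather than transferred ``verbatim''. Second, your final step is garbled as stated: $\chi$ is a character of $N$ only, so ``twisting by a suitable dual power of $\chi$'' does not define a representation of $G$. The standard repair, which does go through differentially, is to pass to the sum $E$ of all $N$-weight spaces of the ambient $G$-module (this sum is $G$-stable exactly because $N$ is normal, as $G$ permutes the weights) and let $G$ act by conjugation on $\mathrm{End}_N(E)$; the kernel of $G \to GL(\mathrm{End}_N(E))$ is precisely $N$, since an element acting trivially must act by scalars on each weight space and hence stabilises the original line. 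With these two corrections, and noting that every object in the construction can be taken defined over $K$ (as $G$, $N$, and hence $I(N)$ are over $K$ and $\mathrm{dcl}(K) = K$ in $DCF_0$), your sketch becomes a complete proof of the cited fact.
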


The analogous result for Abelian $DAG$'s fails. 

\vspace{5mm}
\noindent

Model theory provides various ordinal-valued dimensions attached to definable sets and types in $DCF_{0}$, the most important for our purposes being Morley rank $RM$ and $U$-rank.  In $DCF_{0}$ every complete type $p(x)$ (in finitely many variables) has $U$-rank $< \omega^{2}$.  The $U$-rank of a definable set is the supremum of the $U$-ranks of complete types extending that definable set.  For definable groups $G$ in $DCF_{0}$, $RM(G) = U(G)$ and moreover the supremum above is attained by the so-called ``generic types" of $G$.  (See \cite{Pillay-Pong}).   We will call $G$ finite-dimensional if $RM(G) < \omega$. This is  Buium's nomenclature \cite{Buium} and corresponds to $G$ having "differential type $0$" in the notation of \cite{Cassidy-Singer-Jordan-Holder} for example.   In general the Morley rank and $U$-rank of a definable group $G$ will have the form $\omega\cdot m + k$, for $m, k$ nonnegative integers.  The ``leading coefficient" (which is $k$ when $m=0$ and $m$ otherwise) is related to but distinct from  the ``typical differential dimension" in the Kolchin theory. 

We will be making use of a number of results about $U$-rank and definable groups from \cite{Poizat-stablegroups} (some of which come from the seminal paper \cite{Berline-Lascar})with full references.

The model theoretic environment for partial differential algebraic geometry is the theory $DCF_{0,m}$ of differentially closed fields with respect to $m$ commuting derivations $\partial_{1}, ..., \partial_{m}$.  Essentially everything we have said above about the ordinary case passes over to the partial case, except now the $U$-ranks (and Morley ranks) of types are bounded by $\omega^{m+1}$, namely every complete type has the form $\omega^{m}k_{m} +  \omega^{m-1}k_{m-1} + ... + \omega k_{1} + k_{0}$, where $k_{0},..,k_{m}$ are nonnegative integers.

Finally we discuss the Galois theoretic notions which are central to this paper.  We are mainly concerned with the Picard-Vessiot theory, but we mention ``generalized strongly normal extensions" in Theorem 1.3, and the parameterized Picard-Vessiot ($PPV$)  theory, with respect to  $DCF_{0,2}$  in Corollary 1.4.

We repeat part of Remark 1.2. A linear differential equation over $K$, in vector form, is something of the form 
$$(*)           \partial Y = AY$$ where $Y$ is a $n\times 1$ (column) vector and $A$ is an $n\times n$ matrix over $K$.  The set $V$ of solutions in $\cal U$  is a subset of ${\cal U}^{n}$ which is an $n$-dimensional $\cal C$-vector space. What we call a fundamental system of solutions is precisely a $\cal C$-basis of $V$, and is the same thing as a matrix $Z\in GL_{n}({\cal U})$ whose columns are solutions of (*).  A $PV$ extension of $K$ for (*) is a differential field extension $L$ of the form $K(Z)$ for some fundamental system $Z$ of solutions, such that $C_{K} = C_{L}$.  $L$ is always contained in some differential closure $K^{diff}$ of $K$, and when $C_{K}$ is algebraically closed, such a $PV$ extension $L$ exists and is unique. 

A first generalization of the notion of Picard-Vessiot extension is  that of a {\em strongly normal extension} and is due to Kolchin \cite{Kolchin-DA-AG}. $L$ is a strongly normal extension of $K$ if for some $\alpha$ such that $L$ is generated over $K$ by $\alpha$, we have (i) for any realization $\beta$ of $tp(\alpha/K)$, $\beta$ is in the differential field generated by $K$, $\alpha$ and ${\cal C}$, and (ii)  $C_{K} = C_{L}$. 

A further generalization is the notion of a ``generalized strongly normal extension of $K$" which was introduced in \cite{DGTI} and elaborated on in  \cite{DGTII}, \cite{DGTIII} and \cite{DGTIV}.  Let $X$ be a $K$-definable set (such as the field of constants). $L = K\langle \alpha\rangle$ is an $X$-strongly normal extension of $K$ if (i) for any realization $\beta$ of $tp(\alpha/K)$, $\beta$ is in the differential field generated by $\alpha$, $K$, and (the coordinates of elements of) $X$, and  (ii) $X(K) = X(K^{diff})$.  (We should mention that condition (ii) has been weakened in \cite{Leon-Sanchez-Pillay}, although this does not affect the current paper.) 

When $K$ is algebraically closed, generalized strongly normal extensions (as well as strongly normal extensions) can be explicitly described in terms of solving certain kinds of differential equations, as follows:  Given a connected algebraic group over $K$, a $\partial$-structure on $G$ is an extension of the derivation $\partial$ on $K$ to a derivation $D$ of the structure sheaf (coordinate ring when $G$ is affine) of $G$. Such a derivation $D$ is equivalent to a  homomorphic regular section $s$ over $K$ of a certain ``shifted tangent bundle" $T_{\partial}G$ of $G$.
 We call $(G,s)$ an algebraic $\partial$-group over $K$, which is an object belonging to algebraic geometry (and was introduced and studied in \cite{Buium}). It gives rise to a map (in fact crossed homomorphism) $dlog_{(G,s)}$ belonging to differential algebraic geometry, from $G$ to its Lie algebra $LG$, namely the map taking $y\in G(\cal U)$ to  $\partial(y)s(y)^{-1}$ where the latter is computed in the algebraic group $T_{\partial}(G)$. We let $(G, s)^{\partial}$ be the kernel of $dlog_{(G,s)}$, a finite-dimensional differential algebraic group over $K$, and if $a\in LG(K)$, we call the equation $dlog_{(G,s)}(y) = a$ a logarithmic differential equation over $K$ on $(G,s)$. 
 In any case, it is shown in \cite{DGTIV} that (assuming $K$ to be algebraically closed), $L$ is a generalized strongly normal extension of $K$ (i.e. an $X$-strongly normal extension of $K$ for some $X$) iff there is a connected algebraic $\partial$-group $(G,s)$ over $K$, a logarithmic  differential equation $dlog_{(G,s})(-) = a\in LG(K)$ over $K$ and a solution $y\in G(K^{diff})$ of the equation such that (i) $L = K(y)$ and (ii) $(G,s)^{\partial}(K) = (G,s)^{\partial}(K^{diff})$.    The case when $G$ is over $C_{K}$ and $s$ is the ``trivial" $\partial$-structure on $G$  recovers Kolchin's strongly normal theory.

\section{Proofs of Main results}
We use notation and conventions from the previous section.  We use repeatedly the fact that  $C_{K^{diff}} = C_{K}^{alg}$, in particular if $C_{K}$ is algebraically closed then it coincides with the constants of $K^{diff}$.

\vspace{5mm}
\noindent
{\bf Proof of Theorem 1.1.}
\newline
We first point out the (trivial) right to left direction. If $K$ is not algebraically closed, then a proper Galois extension of $K$ gives a finite principal homogeneous space $X$ over $K$ for a finite group $G$, such that $X(K) = \emptyset$. So we may assume $K$ is algebraically closed. If $K$ is not $PV$-closed $K$ has a proper $PV$extension for a system $\partial Y = AY$ over $K$. $L$ is generated by a fundamental system $b = (b_{1},..,b_{n})$ of solutions for the equation.
The map taking $\sigma\in Aut_{\partial}(L/K)$ to $\sigma(b)b^{-1}$  (multiplication in $GL_{n}$) does not depend on the choice of $b$, and establishes an isomorphism between $Aut_{\partial}(L/K)$ and a differential algebraic subgroup $G$ of $GL_{n}$ defined over $K$.   The orbit of $b$ under $G$ (where $G$ acts on the left) is a differential algebraic left $PHS$ for $G$, defined over $K$, and with no $K$-point, contradicting our assumptions.  (The above construction of $G$ should be considered well-known but details are also given in Section 3 of \cite{Kamensky-Pillay}. )

We now prove the left to right direction.  We are assuming that $K$ is algebraically closed and $PV$ closed. We let $G$ be an arbitrary linear differential algebraic group over $K$ and we aim  to prove that $H^{1}_{\partial}(K,G) = \{1\}$. 

We will be using the following:

\begin{Lemma} [{\bf INDUCTIVE PRINCIPLE}] 
Suppose that we have a short exact sequence $1 \to N \to G \to H \to 1$ of  differential algebraic groups over $K$. Suppose that $H^{1}_{\partial}(K,H) = \{1\}$ and $H^{1}_{\partial}(K,N) = \{1\}$. Then $H^{1}_{\partial}(K,G) = \{1\}$. 
\end{Lemma}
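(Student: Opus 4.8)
The plan is to run the standard short-exact-sequence (inductive) argument from non-abelian cohomology, but phrased entirely in terms of torsors rather than cocycles, so as to sidestep the subtleties of exactness for pointed sets. Throughout I would use the synonymy of differential algebraic varieties and $K$-definable sets together with elimination of imaginaries in $DCF_0$ and Fact 1.5. Concretely, let $X$ be an arbitrary differential algebraic $PHS$ for $G$ over $K$; since triviality of $H^1_\partial(K,G)$ means precisely that every such $X$ has a $K$-point, it suffices to produce a point of $X(K)$.

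First I would form the quotient by $N$. As $N$ is normal in $G$, the relation $x \sim x'$ iff $x' \in N\cdot x$ is a $K$-definable equivalence relation on $X$, so by elimination of imaginaries the quotient $X/N$ is (identified with) a $K$-definable set, and by Fact 1.5 it acquires the structure of a differential algebraic variety over $K$. Normality of $N$ makes the left $G$-action descend to $X/N$ and factor through $H = G/N$; one checks this descended action is again simply transitive, so $X/N$ is a differential algebraic $PHS$ for $H$ over $K$. Since $H^1_\partial(K,H) = \{1\}$, it has a $K$-point $\bar{x} \in (X/N)(K)$.

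Next I would pass to the fiber over $\bar{x}$. Writing $\pi : X \to X/N$ for the quotient map, set $Y = \pi^{-1}(\bar{x})$. Because $\bar{x}$ is $K$-rational and $\pi$ is defined over $K$, the set $Y$ is a $K$-definable subset of $X$; it is a single $N$-orbit on which $N$ acts simply transitively (freely because $G$ does, transitively because it is one orbit), so $Y$ is a differential algebraic $PHS$ for $N$ over $K$, again invoking Fact 1.5 to transport the definable $PHS$ structure into the differential algebraic category. As $H^1_\partial(K,N) = \{1\}$, the torsor $Y$ has a $K$-point $y$, and then $y \in Y(K) \subseteq X(K)$ is the desired $K$-point of $X$. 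This shows $H^1_\partial(K,G) = \{1\}$.

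The conceptual content here is classical; the step I expect to require the most care is the verification that the two auxiliary objects, $X/N$ and the fiber $Y$, genuinely live in the category of differential algebraic varieties (equivalently $K$-definable sets) \emph{over} $K$, and that the induced $H$- and $N$-actions are morphisms over $K$ making them honest $PHS$'s in the correct sense. This is exactly where elimination of imaginaries (to realize $X/N$ as a definable set over $K$) and Fact 1.5 (to move $PHS$ structure between the definable and differential algebraic categories in both directions) do the real work; once these formal points are secured the argument closes immediately. I would also note that no abelianness or linearity hypothesis on $N$, $G$, or $H$ is used anywhere, so the lemma applies verbatim to the not-necessarily-linear setting needed for Theorem 1.3.
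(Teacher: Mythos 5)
Your proposal is correct and follows exactly the paper's own argument: pass to the set of $N$-orbits (your $X/N$, the paper's $Y$), which is a $PHS$ for $H$ over $K$, get a $K$-point by triviality of $H^{1}_{\partial}(K,H)$, then the corresponding $N$-orbit is a $K$-defined $PHS$ for $N$ whose $K$-point (from triviality of $H^{1}_{\partial}(K,N)$) gives the desired point of $X(K)$. The paper states this in three sentences and leaves implicit the definability bookkeeping (elimination of imaginaries, Fact 1.5) that you spell out; the substance is identical.
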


\begin{proof} This follows from the usual exact sequence in cohomology, but we can be more explicit: Suppose $X$ is a differential algebraic $PHS$ for $G$ over $K$.  Then the set $Y$ of $N$-orbits in $X$ is a differential algebraic $PHS$ for $H$ over $K$. If $Y$ has a $K$-point, then this yields an $N$-orbit $Z\subseteq X$, defined over $K$, and then a $K$-point of $Z$ yields a $K$-point of $X$. 
\end{proof}

\vspace{5mm}
\noindent
There are several ways to proceed, modulo Lemma 2.1 above, depending on what and how much to quote of the  differential algebraic and model-theoretic literature.  Cassidy and Singer \cite{Cassidy-Singer-Jordan-Holder}, working in the context of several commuting derivations,  prove that an arbitrary ``strongly connected" differential algebraic group over $K$ has a subnormal series (of differential algebraic groups over $K$) where the successive quotients are ``almost simple". Moreover in the ordinary case (one derivation) they describe the ``almost simple" linear differential algebraic groups, so it would be  enough to check Theorem  1.1 for each of these.   In fact we will take a slightly different route, with more model-theoretic input,  notation, and information about the structure of linear differential algebraic groups, but ending up checking Theorem 1.1 in essentially the same special cases (and more or less recovering the decomposition theorem of \cite{Cassidy-Singer-Jordan-Holder}). 

\vspace{5mm}
\noindent
{\bf Case (1)}  $G$ is ``finite-dimensional" (so  finite Morley rank, or finite $U$-rank).  By the inductive principle we may assume $G$ is connected. Let $N$ be the ``definable solvable radical of $G$", namely the maximal normal solvable connected  definable subgroup.  It exists and is unique as $G$ has finite Morley rank, in particular $N$ is defined over $K$. Then clearly $H = G/N$ is defined over $K$, linear (by Fact 1.9), and {\em semisimple} in the sense that $H$ has no connected normal abelian definable subgroup. This notion of semisimplicity agrees with the notion in \cite{Cassidy-semisimple}. 

By Lemma 2.1, we can separate into subcases:
\newline
{\bf Case (1)(a).}  $G = N$, namely $G$ is solvable. 

As $G$ is a differential algebraic subgroup of $GL_{n}({\cal U})$ defined over $K$, we can consider $\bar G$,  the Zariski closure of $G$, a connected solvable linear algebraic group over $K$ (in the algebraic geometric sense). By the results in Section III.10 of \cite{Borel}, $\bar 
G$ is filtered by a chain of normal algebraic subgroups (over $K$) each successive quotient being isomorphic over $K$ to the additive group ${\mathbb G}_{a}$ or multiplicative group ${\mathbb G}_{m}$.  Intersecting this chain with $G$, we see that $G$ is filtered by a chain of normal definable (over $K$) subgroups, where we may assume that  each successive quotient is a subgroup of $({\cal U},+)$ or $({\cal U}^{*}, \times)$.  By Lemma 2.1 we may assume that either $G$ is a (finite-dimensional) differential algebraic  subgroup of $({\cal U},+)$, or  a (finite-dimensional) differential algebraic subgroup of $({\cal U}^{*},\times)$. 

Suppose first $G \leq ({\cal U},+)$.  By Cassidy \cite{Cassidy-unipotent}, $G$ (being finite dimensional)  is defined by a homogeneous linear differential equation over $K$ (in a single indeterminate $y$ but of arbitrary finite order).  So $G(K^{diff})$ is a finite-dimensional vector space over $C_{K}$ (the latter being algebraically closed) and adjoining to $K$ a basis, yields a Picard-Vessiot extension of $K$. By our assumptions such a basis must already lie in $K$ whereby $G(K^{diff})$ is isomorphic, over $K$ to $(C_{K})^d)$ for some $d$.   Suppose now that $X$ is a $PHS$ for $G$ over $K$, so $X(K^{diff})$ is a $PHS$ for $(C_{K})^{d}$ over $K$. Let $\alpha\in X(K^{diff})$, so $L = K \langle\alpha\rangle$ is clearly a strongly normal extension of $K$. For $\sigma\in Aut_{\partial}(L/K)$ there is unique $g_{\sigma}\in (C_{K})^{d}$ such that $\sigma(\alpha) = g_{\sigma}\alpha$. As $(C_{K})^{d}$ is commutative we see that the map taking $\sigma$ to $g_{\sigma}$ yields an isomorphism between $Aut_{\partial}(L/K)$ and some algebraic subgroup of $C_{K}^{d}$, in particular with a linear algebraic group (in the constants), whereby, by Corollary 2 in Section 9, Chapter VI of \cite{Kolchin-DA-AG}, $L$ is a $PV$ extension of $K$. So by our assumptions, $\alpha\in X(K)$. We have shown that $H^{1}_{\partial}(K,G) = \{1\}$. 

Now suppose $G\leq ({\cal U}^{*},\times)$. 
We can use the logarithmic derivative map from $({\cal U}^{*},\times)$ to $({\cal U},+)$ which takes $y$ to $\partial(y)/y$ to map $G$ onto a finite-dimensional definable (over $K$) subgroup of $(U,+)$ with kernel a definable (over $C_{K}$) subgroup of ${\mathbb G}_{m}({\mathcal C})$.  We can apply the previous paragraph to the image of $G$. The kernel is handled in a similar fashion (any $PHS$ for it over $K$ will yield a strongly normal, hence $PV$ extension of $K$, so has to trivial). So Lemma 2.1 tells that $H^{1}_{\partial}(K,G)$ is again trivial.  

We have completed Case (1)(a). 

\vspace{2mm}
\noindent
{\bf Case (1)(b).}
 $G = H$, namely $G$ is ``semisimple".   So $G$ is a linear (connected) $\partial$-semisimple group over $K$ in the sense of \cite{Cassidy-semisimple}.  Putting together Theorems 14 and 15 of \cite{Cassidy-semisimple}, $G$ is isogenous over $K$ to a direct product of simple (up to finite centres) noncommutative finite-dimensional linear differential algebraic groups $G_{1},..,G_{k}$, each over $K$. By Lemma 2.1, we may assume that $G$ is equal to some $G_{i}$ and is moreover centreless. By Theorem 1.5 of \cite{Pillay-DAG} $G(K^{diff})$ is isomorphic over $K^{diff}$ to a group of the form $H(C_{K^{diff}}) = H(C_{K})$ where $H$ is a simple linear algebraic group over $C_{K}$. By Proposition 4.1 of \cite{DGTIV} and our assumptions, such an isomorphism can be found defined over $K$. So we may assume $G = H({\cal C})$.  Now suppose $X$ is a (left) $PHS$ for $H({\cal C})$, over $K$.  Let again $\alpha\in X(K^{diff})$ and $L = K\langle\alpha\rangle$.  The map taking $\sigma \in Aut_{\partial}(L/K)$ to  $g_{\sigma}$ in $H(C_K)$ establishes an isomorphism between $Aut_{\partial}(L/K)$ and $B^{opp}$ for some algebraic subgroup $B$ of $H(C_{K})$. As $B^{opp}$ is also a linear algebraic group in $C_{K}$, we see again by the result of Kolchin mentioned earlier, that $L$ is a $PV$ extension of $K$. So by our assumptions $\alpha\in X(K)$.  So $X$ has a $K$-point, and we have completed Case (1)(b) and also Case (1). 

\vspace{5mm}
\noindent
{\bf Case (2).}  $G$ is arbitrary. 
\newline
We will again use model theoretic notions although purely differential algebraic notions  would also suffice. In general $G$ has $U$-rank $\omega . m + k$ for some nonnegative integers, $m, k$ and   $G$ is ``finite dimensional" iff $m= 0$. We call $G$  {\em $1$-connected} if  $G$ has no definable quotient $G/H$ (a homogeneous space) with $U(G/H)$ finite (or $G/H$ finite-dimensional). By the descending chain condition on definable subgroups (given by $\omega$-stability)  $G$ has a unique $1$-connected component, namely a smallest subgroup $G^{1,0}$ such that $G/G^{1,0}$  
is finite-dimensional. Moreover if $G$ is defined over $K$ so is $G^{1,0}$. By Corollary 6.3 of \cite{Poizat-stablegroups} ( $U$-rank inequalities for groups) and Theorem 6.7 of \cite{Poizat-stablegroups} (the Berline-Lascar decomposition),  $G$ is $1$-connected iff  it is connected and has $U$-rank of the form $\omega . m$. 
In any case by  Case (1) and Lemma 2.1, 
\newline
{\bf We may assume that $G$ is $1$-connected.}

We let $N$ be the  maximal normal solvable $1$-connected definable subgroup of $G$, clearly defined over $K$, which exists and is unique for the same reasons as in Case (1).  Then $H = G/N$ is also $1$-connected, and is ``almost semisimple"  in the sense that  $H$ has no proper nontrivial normal $1$-connected abelian definable subgroup.   By Lemma 2.1 it is enough to treat the cases where $G = N$ and where $G = H$. So: 
\newline
{\bf Case (2)(a).}  $G = N$. Exactly as in Case (1)(a), by looking at the Zariski closure $\bar G$ of $G$, a connected solvable linear algebraic group, we find a sequence $G_{i}$ of normal $K$-definable subgroups of $G$ with the  quotients  (without loss of generality) subgroups of $({\cal U},+)$ or $({\cal U},\times)$.  By Case (1) we may assume that each quotient $G_{i}/G_{i-1}$ has infinite Morley rank (or $U$-rank)  and thus has Morley rank (or $U$-rank) $\omega$. But both $({\cal U},+)$ and $({\cal U}^{*},\times)$  are connected with $U$-rank $\omega$ whereby $G_{i}/G_{i-1}$ is equal to $({\cal U},+)$ or $({\cal U}^{*}, \times)$. In particular each quotient is an algebraic group over $K$. We can now use Fact 1.6 to deduce that   $H^{1}_{\partial}(G)$ is trivial. 

\vspace{2mm}
\noindent
{\bf Case (2)(b).}  $G = H$.   Let $Z$ be the centre of $G$. By ``almost semisimplicity" of $G$, $Z$ is finite-dimensional. On the other hand using the  $1$-connectedness of $G$, $G/Z(G)$ is simple (and centreless).  So by  the inductive argument  and Case (1), we may assume that $G$ is centreless (and still linear). We may assume that the Zariski closure of $G$ (in the ambient general linear group) is semisimple and even centreless (for example by Theorem 14 of \cite{Cassidy-semisimple}).  By Theorem 15 of \cite{Cassidy-semisimple} we may assume that $G$ is simple and and its Zariski closure $\bar G$ is a simple algebraic group over $K$.  As $G$ has infinite Morley rank, we can conclude from Theorem 17 of \cite{Cassidy-semisimple}, or Proposition 5.1 of \cite{Pillay-DAG}, that $G = \bar G$, namely $G$ is an algebraic group.  Again by Fact 1.6,  $H^{1}_{\partial}(K,G)$ is trivial.  The proof is complete.

\vspace{5mm}
\noindent
{\bf Proof of Theorem 1.3.}  The nontrivial (left to right) direction of Theorem 1.3 is actually implicitly claimed in \cite{DGTII}:  Corollary 3.6 in that paper proves that if the first order theory of $K$ is superstable, $G$ is a finite Morley rank group definable over $K$, and $X$ is a $K$-definable $PHS$ for $G$, then $X(K) = X(K^{diff})$. And the sentence following the proof of that result says that it is not hard to prove (assuming $Th(K)$ superstable) that $X(K)\neq\emptyset$  whenever $X$ is a $PHS$ over $K$ for {\em any}{ definable group $G$ over $K$, although no actual proof is given.  As is readily seen the only use of superstability in the proof of Corollary 3.6 is Theorem 1.2 in that paper which says that superstability of $Th(K)$ implies that $K$ is closed under generalized strongly normal extensions.  As the proof of 3.6 in \cite{DGTII} makes heavy use of the full trichotomy theorem for $U$-rank $1$-types in $DCF_{0}$, we take the opportunity to sketch  how to extend the rather direct proof of Theorem 1.1 above to a proof of Theorem 1.3. 

First the right to left direction: If $K$ is not algebraically closed then a Galois extension of $K$ gives rise to a $PHS$ over $K$ for a finite group, as before. So we assume $K$ to be algebraically closed.  By \cite{DGTIV}, described in the last paragraph of the introduction to the current paper,  a generalized strongly normal extension $L$ of $K$ is given by adjoining a solution $\alpha\in G(K^{diff})$ of a logarithmic differential equation $dlog_{G,s}(-) = a$ on an algebraic $D$-group $(G,s)$ over $K$ with $a\in LG(K)$, where $(G,s)$ is $K$-large, namely $(G,s)^{\partial}(K^{diff}) = (G,s)^{\partial}(K)$.  As in the proof of Theorem 1.1 the map taking $\sigma\in Aut_{\partial}(L/K)$ to $\sigma(\alpha)\alpha^{-1}$  (multiplication etc. in $G$) established an isomorphism with a differential algebraic subgroup $H^{+}$ of $G(K^{diff})$, defined over $K$.  Moreover the orbit of $\alpha$ under left multiplication by $H^{+}$ is a left $PHS$ for $H^{+}$ with no $K$-point, contradicting the assumptions.

For the left to right direction we first use Fact 1.8 to obtain 
an exact sequence $1\to N \to G \to H \to 1$ of differential algebraic groups over $K$, where $N$ is linear (namely embeds over $K$ into some $GL_{n}$)  and $H$ is ``Abelian",  namely embeds over $K$ in an abelian variety. 
Hence the proof proceeds exactly as in the proof of Theorem 1.1 (left to right), using  Lemma 2.1, but with an additional case, namely when $G$ is a definable (over $K$) subgroup of a simple abelian variety $A$ (over $K$).  We will now refer to Fact 1.7(ii) of \cite{DGTIII} where the following is stated, with explanations and references. The so-called Manin map $\mu$ is a definable (over $K$) homomorphism from $A$ onto $(U,+)^{d}$ where $d = dim(A)$ and where $ker(\mu)$ is the unique smallest connected infinite definable subgroup of $A$, which we usually write as $A^{\sharp}$.  In the case when $A$ descends to the constants (so to $C_{K}$) then $\mu$ coincides with Kolchin's logarithmic derivative, and $A^{\sharp}$ is simply $A({\cal C})$, and as $C_{K}$ is algebraically closed we see that $A^{\sharp}(K) = A^{\sharp}(K^{diff})$. In the case when $A$ does not descend to the constants, then $A^{\sharp}$ is {\em strongly minimal}. Strong minimality plus the fact that $A^{\sharp}$ contains the torsion subgroup of $A$  implies that $A^{\sharp}(K) = A^{\sharp}(K^{diff})$ (see Lemma 2.2 of \cite{DGTIII}). 

Going back to our $K$-definable subgroup $G$ of $A$, let $B = \mu(A)$, and $C = G\cap A^{\sharp}$. So $B$ is a linear $DAG$ over $K$ and by Theorem 1.1, $H^{1}_{\partial}(K,B)$ is trivial. So using Lemma 2.1, it remains to show that $H^{1}_{\partial}(K,C)$ is trivial.  If $C$ is finite, we are fine as $K$ is algebraically closed. Otherwise by strong minimality of $A^{\sharp}$, $C = A^{\sharp}$.  Let $X$ be a left $PHS$ for $C$ defined over $K$. Let $\alpha\in X(K^{diff})$. Then clearly $L = K\langle\alpha\rangle$ is a generalized strongly normal extension of $K$ (with respect to $C$), as $C(K) = C(K^{diff})$), so by our assumption $\alpha\in X(K)$, and we finish.

\vspace{5mm}
\noindent
{\bf Proof of Corollary 1.4.}
\newline
We are in the context of fields $K$ equipped  with two commuting derivations $\partial_{x}$ and $\partial_{t}$.  The model-theoretic environment is $DCF_{0,2}$, mentioned earlier.  Our notation for fields of constants is now a bit different. Given a differential field $K$, $K^{\partial_{x}}$ denotes the field of constants of $K$ with respect to $\partial_{x}$, which note is a $\partial_{t}$ field. If $K$ is a model of $DCF_{0,2}$ then $(K^{\partial_{x}},\partial_{t})$ is a model of $DCF_{0}$. 

\vspace{2mm}
\noindent
As $K^{\partial_{x}}$ is algebraically closed, by \cite{Wibmer} there exists a $PPV$ extension $L$ of $K$ for the equation $\partial_{x}Y = AY$.  Let $Z$ be a fundamental matrix of solutions which generates $L$ over $K$ (as a $\{\partial_{x}, \partial_{t}\}$-field).  Let $K^{diff}$ be a differential closure of $K$ (with respect to both derivations).   Let $L_{1}$ be the differential field generated by $L$ and $(K^{diff})^{\partial_{x}}$, and let $K_{1}$ be the differential field generated by $K$ and $(K^{diff})^{\partial_{x}}$, and let $Aut(L_{1}/K_{1})$ be group of automorphisms of $L_{1}$ over $K_{1}$ (as a $\{\partial_{x}, \partial_{t}\}$-field).  If $\sigma\in Aut(L_{1}/K_{1})$ then $\sigma(Z) = Zc_{\sigma}$ for some matrix 
$c_{\sigma}\in GL_{n}((K^{diff})^{\partial_{x}})$. The map $\sigma \to c_{\sigma}$ gives an isomorphism between $Aut(L_{1}/K_{1})$ and some linear differential algebraic (with respect to $\partial_{t}$) subgroup $G$ of $GL_{n}((K^{diff})^{\partial_{x}})$ defined over $K^{\partial_{x}}$.  Now Propositions 4.25, 4.28, and Theorem 5.5 of \cite{GGO}  (which use a differential Tannakian formalism) yield a $1-1$ map from the set of $PPV$ extensions of $K$ for the equation $\partial_{x}Y = AY$, up to isomorphism over $K$ (as $\{\partial_{x}, \partial_{t}\}$-fields) to $H^{1}_{\partial}(K^{\partial x}, G)$  (with respect to the ambient differentially closed field $(K^{diff})^{\partial_{x}}, \partial_{t})$ which actually coincides with the differential closure of $(K^{\partial_{x}}, \partial_{t})$).   The assumption that $(K^{\partial_{x}}, \partial_{t})$  is both algebraically closed and $PV$-closed, together with Theorem 1.1, implies that $H^{1}_{\partial}(K^{\partial_{x}}, G)$ is trivial, whereby we obtain uniqueness of the $PPV$ extension $L$ of $K$.

\section{Additional questions and remarks.}
There are several natural environments to which one would like to extend Theorem 1.1. The first is the positive characteristic case, where the field is equipped with an iterative Hasse derivation. The model theoretic context is  the  theory of separably closed fields of characteristic $p>0$ with Ershov invariant $1$.  Even an appropriate  formulation of  the problem would be interesting as from the model theoretic point of view we are working with type-definable linear groups. 

The second is difference Galois theory, with respect to a single automorphism.  From the model-theoretic point of view, the first order theory would be $ACFA$.  An appropriate formulation of the problem would be interesting.

The third is that of several commuting derivations.  The right hand side condition would be that the constrained cohomology of any linear differential algebraic group over $K$ is trivial. The left hand side condition should be that $K$ is both algebraicaly closed and ``Picard-Vessiot closed" with respect to any set of commuting defnable derivations (and corresponding systems of linear differential equations).

We expect that in the second and third cases, the existence of ``exotic" linear difference (differential) algebraic groups implies that natural analogues of Theorem 1.1 fail.  A paper on the topic, joint with Zo\'{e} Chatzidakis, is in preparation. 

Finally: We can view Theorem 1.1 as a strong differential  analogue of the trivial fact that a field $K$ is algebraically closed iff $H^{1}(K,G)$ is trivial for every linear algebraic group over $K$.  What then would
be the differential analogue of Serre's theorem \cite{Serre} that a field $K$ has finitely many extensions of degree $n$ for all $n$ iff $H^{1}(K,G)$ is finite for every linear algebraic group $G$ over $K$?

\vspace{5mm}
\noindent
{\em Acknowledgements.}  This paper is a write-up of talks given by the author at the Differential Algebra workshop in CUNY, April 2016, and the Differential Algebra and Related Topics meeting in CUNY, October 2016.  Thanks to the organizers for the hospitality and stimulating environment.


\begin{thebibliography}{99}

\bibitem{Berline-Lascar}   Ch. Berline and D. Lascar,  Superstable groups, Annals of Pure and Applied Logic 30 (1986), 1-43. 

\bibitem{Borel}  A. Borel, {\em Linear Algebraic Groups}, Springer 1991. 

\bibitem{Buium} A. Buium, {\em Differential algebraic groups of finite dimension}, Springer Lecture Notes 1509, Springer, 1992.

\bibitem{Cassidy-LDAG} Ph. J. Cassidy, Differential algebraic groups, American J. Math. 94 (1972), 891-954.

\bibitem{Cassidy-semisimple} Ph. J. Cassidy, The classification of the semisimple differential algebraic groups and linear semisimple differential algebraic Lie algebras, Journal of Algebra, 121 (1989), 169-238.

\bibitem{Cassidy-unipotent} Ph. J. Cassidy,  Unipotent differential algebraic groups, in Contributions to  Algebra, Academic Press, New York 1977, 83-115.

\bibitem{Cassidy-Singer-PPV} Ph. J. Cassidy and M. Singer, Galois theory of parameterized differential equations and linear differential algebraic groups, in {\em Differential equations and quantum groups}, vol. 9, IRMA Lect. Math. Theor. Phys., 113-155, European Math. Soc.,  2007. 

\bibitem{Cassidy-Singer-Jordan-Holder}  Ph. J. Cassidy and M. Singer,  A Jordan-H\"{o}lder Theorem for differential algebraic groups, Journal of Algebra, 328  (2011), 190-217.

\bibitem{GGO} H.Gillet, S. Gorchinskiy, and A. Ovchinnikov,  Parameterized Picard-Vessiot extensions and Atiyah extensions, Advances in Mathematics 238 (2013), 322-411

\bibitem{Kamensky-Pillay} M. Kamensky and A. Pillay,  Interpretations and differential Galois extensions, International Math. Research Notices, 2016.

\bibitem{Kolchin-DA-AG}  E. R. Kolchin, {\em Differential Algebra and Algebraic Groups}, Academic Press, 1973.

\bibitem{Kolchin-DAG}  E. R. Kolchin,  {\em Differential Algebraic Groups}, Academic Press, 1985.

\bibitem{Leon-Sanchez-Pillay}  O. L\'eon Sanchez and A. Pillay, Some definable Galois theory and examples, submitted to Bulletin Symbolic Logic. 

\bibitem{Marker} D. Marker, Differential fields, in {\em Model theory of Fields}, second edition,  Lecture Notes in Logic,ASL-AK Peters, 2006. 

\bibitem{DGTIII} D. Marker and A. Pillay, Differential Galois Theory III; some inverse problems,  Illinois J. Math., 41 (1997), 453-461.

\bibitem{DGTI}  A. Pillay, Differential Galois Theory I, Illinois J. Math., 42 (1998), 678-698.

\bibitem{DGTII} A. Pillay,  Differential Galois Theory II, Annals of Pure and Applied Logic 88 (1997), 181-191.

\bibitem{DGTIV} A. Pillay, Algebraic $D$-groups and differential Galois theory, Pacific J. Math., 216 (2004), 343-360.

\bibitem{Pillay-Foundations}  A. Pillay,  Some foundational questions concerning differential algebraic groups, Pacific J. Math., 179 (1997), 179-200.

\bibitem{Pillay-Galoiscohomology}  A. Pillay, Remarks on Galois cohomology and definability, Journal of Symbolic Logic, 62 (1997), 487-492.

\bibitem{Pillay-DAG}  A. Pillay, Differential algebraic groups and the number of countable models, in {\em Model theory of Fields}, second edition,  Lecture Notes in Logic,  ASL --  A.K. Peters, 2006. 

\bibitem{Pillay-Pong} A. Pillay and W.Y. Pong, On Lascar Rank and Morley Rank of Definable Groups in Differentially Closed Fields, JSL, 67 (2002), 1189-1196.

\bibitem{Poizat-stablegroups} B. Poizat, {\em Groupes Stables}, Nur al-mantiq wal-ma'rifah,  Villeurbanne, 1987. 

\bibitem{Serre}  J.P Serre, Cohomologie Galoisi\`{e}nne, Springer Lecture Notes in Mathematics, 1994. 

\bibitem{Wibmer}  M. Wibmer, Existence of $\partial$-parameterized Picard-Vessiot extensions over fields with algebraically closed constants, Journal of Algebra 361 (2012), 163-171.





\end{thebibliography}
\end{document}